\DeclareMathAlphabet{\mathcal}{OMS}{cmsy}{m}{n}
\numberwithin{equation}{section}
\newtheoremstyle{thmlemcorr}{10pt}{10pt}{\itshape}{}{\bfseries}{.}{10pt}{{\thmname{#1}\thmnumber{ #2}\thmnote{ (#3)}}}
\newtheoremstyle{thmlemcorr*}{10pt}{10pt}{\itshape}{}{\bfseries}{.}\newline{{\thmname{#1}\thmnumber{ #2}\thmnote{ (#3)}}}
\newtheoremstyle{remexample}{10pt}{10pt}{}{}{\bfseries}{.}{10pt}{{\thmname{#1}\thmnumber{ #2}\thmnote{ (#3)}}}
\newtheoremstyle{ass}{10pt}{10pt}{}{}{\bfseries}{.}{10pt}{{\thmname{#1}\thmnumber{ A#2}\thmnote{ (#3)}}}
\theoremstyle{thmlemcorr}
\newtheorem{theorem}{Theorem}
\numberwithin{theorem}{section}
\newtheorem{lemma}[theorem]{Lemma}
\newtheorem{problem}[theorem]{Problem}
\newtheorem{definition}[theorem]{Definition}
\theoremstyle{remexample}
\newtheorem{remark}[theorem]{Remark}
\newtheorem{example}[theorem]{Example}
\theoremstyle{ass}
\newcommand{\Rbb}{\mathbb{R}}
\newcommand{\Tbb}{\mathbb{T}}
\newcommand{\T}{\mathbb{T}}
\DeclareMathOperator{\dist}{dist}
\DeclareMathOperator{\id}{id}
\DeclareMathOperator{\diverg}{div}
\DeclareMathOperator{\curl}{curl}
\DeclareMathOperator{\supp}{supp}
\newcommand{\dd}{\;\mathrm{d}}
\newcommand{\N}{\mathbb{N}}
\newcommand{\R}{\mathbb{R}}
\newcommand{\Z}{\mathbb{Z}}
\newcommand{\eps}{\epsilon}
\def\d{{\,\rm d}}
\newcommand{\image}{\mathrm{im}\;}
\renewcommand{\eps}{\varepsilon}
\renewcommand{\epsilon}{\varepsilon}
\renewcommand{\phi}{\varphi}
\def\XXint#1#2#3{{\setbox0=\hbox{$#1{#2#3}{\int}$ }
		\vcenter{\hbox{$#2#3$ }}\kern-.58\wd0}}
\begin{document}


\title[Lower Semi-Continuity under Convex Constraints]{Lower Semi-Continuity for $\mathcal A$-quasiconvex 
Functionals under Convex Restrictions}


%

\author{Jack Skipper}
\address{\textit{Jack Skipper:} Institute of Applied Analysis, Ulm University, Helmholtzstrasse~18, 89081 Ulm, Germany}
\email{jwd.skipper@gmail.com}

\author{Emil Wiedemann}
\address{\textit{Emil Wiedemann:} Institute of Applied Analysis, Ulm University, Helmholtzstrasse~18, 89081 Ulm, Germany}
\email{emil.wiedemann@uni-ulm.de}


\begin{abstract}

	We show weak lower semi-continuity of functionals assuming the new notion of a ``convexly constrained''  $\mathcal A$-quasiconvex integrand. 
	We assume $\mathcal A$-quasiconvexity
	only for functions defined on a set $K$ which is convex. Assuming this and sufficient integrability of the sequence we show that the functional is still (sequentially) weakly lower semi-continuous along weakly convergent ``convexly constrained''  $\mathcal A$-free sequences. In a motivating example, the integrand is $-\det^{\frac{1}{d-1}}$ and the convex constraint is positive semi-definiteness of a matrix field.

\vspace{4pt}

\noindent\textsc{MSC (2010): 49J45 (primary); 35E10, 35Q35.
}

\vspace{4pt}

\noindent\textsc{ Keywords: Convex Sets, $\mathcal A $-Quasiconvexity, $\mathcal A$-Free, Lower Semi-continuity, Young Measures, Potentials, and Calculus of Variations.} 

\vspace{4pt}

\noindent\textsc{Date:} \today{}.
\end{abstract}
\maketitle






\section{Introduction}

We will study (sequential) weak lower semi-continuity criteria for functionals of the form
\begin{equation}
u\mapsto \int_\Omega f(u(x))\d x
\end{equation}
  with respect  to weakly converging sequences $u_n\rightharpoonup u$ in $L^p$. With appropriate growth bounds for $f$, for instance $|f(v)|\le  C(1+|v|^p)$, we ask for which integrands $f$ we have this lower semi-continuity property. 
  
  If one considers all sequences in $L^p$ then we need $f$ to be convex. When we restrict the space of functions further then one can weaken the requirements on $f$. It  was shown by Morrey \cite{Morreyquaiconvexity}  that, considering only sequences of gradients, it suffices to require $f$ to be quasiconvex,
  that is, 
	\begin{equation}
f(\zeta)\le \frac{1}{|Q|}\int_{Q} f(\zeta+\nabla w(x))\d x
\end{equation}
for all $\zeta$ and all smooth $w$ with compact support on a cube $Q$. One can view this as a Jensen-type inequality but for gradients only. 
On a cube, a function is a gradient if and only if its curl is zero and thus we obtain
an equivalent condition by considering
weakly converging  sequences in $L^p$ that are curl-free, i.e.  $\curl u =0 $, and  thus assume that $f$ satisfies  
	\begin{equation}
f(\zeta)\le \frac{1}{|Q|}\int_{Q} f(\zeta+u (x))\d x
\end{equation}
for all $\zeta$ and all smooth curl-free $u$ with compact support on a cube $Q$.

In order to study other physical systems we can generalise these methods to study 
$\mathcal A$-free systems with $\mathcal A$ a  linear, homogeneous, differential operator with constant coefficients and, in most cases, constant rank. 
The $\mathcal A$-free approach was introduced in the work of Murat \cite{Muratcompensatedcompactness} and Tartar \cite{Tartarfrenchcompensated}, \cite{Tartarcompensated} studying compensated compactness, and has applications in
elasticity, plasticity, elasto-plasticity, electromagnetism etc.  Dacorogna \cite{DacorognaBookgenaralquasiconvexity} then established the corresponding notion of $\mathcal{A}$-quasiconvexity and showed it to be sufficient for lower semicontinuity, and a variant of this quasiconvexity condition was later shown by Fonseca and M\"uller \cite{FonescaMuelleraquasiconvexity} to be sufficient and necessary for lower semicontinuity.

We say that $\mathcal B$ is a  \emph{potential operator}  for the $\mathcal A$-free system if $\mathcal B$ is a linear homogeneous differential operator with constant coefficients, defined on the set of smooth periodic functions of zero mean, such that $\operatorname{im}\mathcal B=\operatorname{ker}\mathcal A$.
We know, for instance, that the gradient is the potential operator for curl-free systems.   With the recent work of Raita \cite{RaitaPotentials} one can derive  potentials in the general $\mathcal A$-free setting assuming $\mathcal A$ has constant rank. However, on general bounded domains and for general linear constraints, it is not the case that for every $\mathcal A$-free $u$ there exists a potential $\Phi$ such that $\mathcal B \Phi=u$. This unfortunately restricts the results of this paper to the torus (but see Remark~\ref{bdddomain} below).

Anyway, using  these potentials,   one establishes  a new definition for $\mathcal A$-quasiconvexity of the form 
	\begin{equation}
	\quad  \quad f(\zeta) \le  \int_{\T^d} f(\zeta +\mathcal{B}\Phi(x)) \d x
	\end{equation}
	for all $\zeta$  and $\phi \in C^\infty (\T^d)$ where $\T^d=\R^d/\Z^d$ is the $d$-dimensional torus with unit volume. 
	We will go into more detail later as we will use and develop the tools in \cite{RaitaPotentials} for our own situation.             

Many 
physical models fall into the constant rank $\mathcal A$-free framework, but with a codomain restricted to a convex set. 
There are examples  from compressible fluids studying the Euler, the Euler-Fourier, and relativistic Euler equations, models for rarefied gases (Boltzmann, discrete kinetic models, BGK), and other models for electromagnetic fields in a vacuum and the mass-momentum tensor in the Schr\"odinger equation.  
In these cases it makes sense to consider the problem of $\mathcal A$-quasiconvexity and (sequential) lower-semicontinuity only for physically relevant values inside such a convex set. Let $K\subset \R^N$ be a convex set with non-empty interior (i.e.\ of full dimension), then we define $K$-$\mathcal A$-quasiconvexity and $K$-$\mathcal A$-free sequences, in analogy to the standard definitions, by adding the constraint that $U\in K$ for almost every $x\in\T^d$.  We will define these terms more rigorously later on.  
We can now state our main result of this paper. 
 \begin{theorem}[Main Theorem]
	Let $p>d$ and
	 $f: K \to \Rbb$ be continuous and satisfy the growth bound  $|f(z)|\le C(1+|z|^r) $ for some $C>0$ and some $r< p$. 
	Then  
	\begin{equation}
	\liminf_{n\to \infty} \int_{\T^d} f(U_n) \d x \ge \int_{\T^d} f(U)\d x 
	\end{equation}
	for every sequence $(U_n)\in L^p(\T^d)$ of $K$-$\mathcal A$-free maps on $\Omega$ that converges weakly in $L^p$ to $U\in L^p(\T^d)$
	if and only if $f$ is $K$-$\mathcal {A}$-quasiconvex.
\end{theorem}

Note that there are interesting cases where the constraint does not define a  convex set: For instance, imposing local non-interpenetration of matter in nonlinar elasticity, one assumes that $\det \nabla u >0$ almost everywhere.  
This restriction models the assumption that under a deformation the matter will not change orientation or be compressed to a point, see \cite{Continontinterpentration} and  \cite{Rindlernoninter}  for more details. 

One potential use for these techniques would be in the area of convex integration. In \cite{lazloconvexeuler} as in other examples one has a functional used to control the sequence of sub-solutions and a convex set of values which each sub-solution can take. In \cite{lazloconvexeuler} lower semi-continuity is shown directly for the relevant functional.   

\subsection{Divergence-Free Positive Tensors}

One motivational result inspiring  this paper has been \cite{SerreDPTs}. 
For dimension $d\geq 2$ and the function $F(A)=(\det A)^{\frac{1}{d-1}}$, D.~Serre studied the specific functional  $\int_\Omega F(U(x))\d x$, where $\Omega$ is either a bounded convex domain or the torus, and $U: \Omega\to\R^{d\times d}$ a matrix field whose values are positive semi-definite. More specifically, he considered symmetric, divergence-free, positive semi-definite tensors defined as follows:  
\begin{definition}
	Let $\Omega$ be an open subset of $\R^d$, then a $DPT$ is defined as a locally integrable, divergence-free, positive symmetric tensor $x\mapsto U(x)$, that is, $U\in L^1_{loc}(\Omega;\R^{d\times d})$  with the properties that 
	\begin{equation}
	U(x)\in \mathrm{Sym}_d^+:=\{A\in\R^{d\times d}: \quad A^t=A, \quad A\geq0\}
	\end{equation}
	almost everywhere and $\diverg U=0$ (row-wise) in the sense of distributions.  
\end{definition}
For this functional on the torus a specific result of Serre was the following.
\begin{theorem}\label{theorem:Jensensinequality}
	Let the $DPT$ $x\mapsto U(x)$ be periodic over $\T^d$, with $U\in L^1(\T^d)$. Then $F(U)\in L^1(\T^d)$ and there holds 
	\begin{equation}
	\int_{\T^d} F(U(x))\d x \le F\left(\int_{\T^d} U(x)\d x \right). 
	\end{equation}
\end{theorem}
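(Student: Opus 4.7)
The plan is to treat the statement as a Jensen-type inequality for $F=\det^{1/(d-1)}$ on $\mathrm{Sym}_d^+$ and to identify how the divergence-free constraint compensates for the fact that $F$ is \emph{not} concave. A direct second-variation computation at the identity gives
\begin{equation*}
\partial_t^2 F(I+tV)\big|_{t=0}=\frac{1}{d-1}\left[\frac{(\mathrm{tr}\,V)^2}{d-1}-\mathrm{tr}(V^2)\right],
\end{equation*}
which is strictly positive at $V=I$ (value $d/(d-1)>0$), so $F$ fails to be concave on $\mathrm{Sym}_d^+$ for every $d\ge 2$; the divergence-free structure must therefore enter essentially.

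First I would reduce, by periodic mollification together with the substitution $U\mapsto U+\eps I$, to the case of a smooth, strictly positive definite $U\in C^\infty(\mathbb{T}^d;\mathrm{Sym}_d^+)$, passing to the limit $\eps\downarrow 0$ at the end by dominated convergence (the right-hand side $F(\bar U)$ is already a finite constant). Writing $U=\bar U+V$ with $V$ a zero-mean, divergence-free, periodic symmetric field, the target is a pointwise majorant
\begin{equation*}
F(\bar U+V)\;\le\;F(\bar U)+\frac{F(\bar U)}{d-1}\,\bar U^{-1}:V+\mathcal N(\bar U,V),
\end{equation*}
in which the affine term is $DF(\bar U):V$ and integrates to zero (since $\int V=0$), while $\mathcal N$ is a \emph{null-Lagrangian} for divergence-free, zero-mean, periodic symmetric tensor fields, i.e.\ its integral over $\mathbb{T}^d$ vanishes for every such $V$.

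In the model case $d=2$ this scheme yields the inequality with equality, via the exact expansion
\begin{equation*}
\det(\bar U+V)=\det\bar U+\mathrm{tr}\bigl(\bar U\cdot \mathrm{Cof}(V)\bigr)+\det V
\end{equation*}
and the scalar potential representation $V=\mathrm{Cof}(D^2\phi)$ for a periodic $\phi$: the $2\times 2$ identity $\mathrm{Cof}(\mathrm{Cof}\,M)=M$ gives $\mathrm{Cof}(V)=D^2\phi$, whose mean over $\mathbb{T}^2$ is zero, while $\det V=\det D^2\phi$ is the classical Monge--Amp\`ere null-Lagrangian and also integrates to zero on the torus.

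The main obstacle, and the technical heart of the statement for $d\ge 3$, is twofold. First, divergence-free symmetric tensor fields in dimension $d\ge 3$ are no longer representable via a scalar cofactor-of-Hessian, so one must invoke higher-order (Beltrami stress-function type) potentials, or the general constant-rank framework of \cite{RaitaPotentials}, to realise the candidate correction terms as perfect divergences. Second, $F=\det^{1/(d-1)}$ is a fractional power of the determinant and does not decompose polynomially into elementary symmetric polynomials of $V$; one therefore has to combine the null-Lagrangian calculus with a Hadamard- or Minkowski-type inequality on positive matrices to bound $(\det(\bar U+V))^{1/(d-1)}$ by the sum of an affine function and null-Lagrangians. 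In effect, the divergence-free constraint must precisely absorb the pointwise concavity deficit of $F$ along the perturbations it permits, and verifying this quantitative compensation is Serre's main contribution.
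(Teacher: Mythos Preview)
The paper does \emph{not} contain a proof of this theorem. It is quoted in Section~1.1 as a result of Serre from \cite{SerreDPTs}, introduced purely as motivation (``For this functional on the torus a specific result of Serre was the following''), and the paper moves on immediately to discuss related questions from \cite{RossaSerreintegrability} and \cite{RossaSerresemiconvaty}. There is therefore nothing in the paper to compare your proposal against; the statement functions as background, not as something the authors establish.

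On its own merits: your $d=2$ sketch is correct and complete---there $F=\det$ is affine along divergence-free symmetric perturbations, and the cofactor-Hessian representation together with the Monge--Amp\`ere null-Lagrangian gives equality, exactly as you say. For $d\ge 3$, however, the proposal is an outline rather than a proof. You correctly identify the two obstructions (no scalar stress function, fractional power of $\det$), but the sentence ``one therefore has to combine the null-Lagrangian calculus with a Hadamard- or Minkowski-type inequality'' is precisely where the work lies, and you do not supply the inequality or verify that the concavity deficit is absorbed by a null-Lagrangian. Your closing remark that ``verifying this quantitative compensation is Serre's main contribution'' is an honest acknowledgement that the argument is incomplete. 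If you want to turn this into a self-contained proof you will need either to carry out the null-Lagrangian decomposition explicitly (which is delicate for the fractional power) or to adopt Serre's original route in \cite{SerreDPTs}, which proceeds via a different mechanism---a Gagliardo-type slicing argument combined with convex-analytic tools---rather than the pointwise-inequality-plus-null-Lagrangian scheme you propose.
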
 
This can be used to obtain a gain of integrability and can be applied to compressible fluid models to improve the a-priori estimates. These methods have been used recently in many cases, see \cite{SerreClassovpoission}, \cite{SerreMultidconservationlaws}, \cite{Serrecollionsmoleculardynamics} and  \cite{SerreSilvestermultiburgers}. 

For this specific $F$ and $U$, further questions were raised. In~\cite{SerreDPTs}, Serre asks whether or not this estimate could be improved to work for general $L^p$. Specifically, he asks the following:
\begin{problem}
Let $U\in L^p(\T^d;\mathrm{Sym}^+_d)$ and $\diverg U\in L^p(\T^d)$ with $1<p<d$. Defining $\tfrac{1}{p'}=\tfrac{1}{p}-\tfrac{1}{d}$, is it true that 
	\begin{equation}
	\det (U)^{\frac{1}{d}} \in L^{p'}(\T^d)?
	\end{equation} 
\end{problem}

In \cite{RossaSerreintegrability} it is shown that there are counterexamples disproving this statement and thus that there exist many $U$ in $L^p$ for $1<p<\frac{d}{d-1}$ such that $(\det U)^{\frac{1}{d-1}}\in L^1\setminus L^{1+\eps}$, for all $\eps >0$.

Another question, closely related to the results in this paper, is whether and for what $U\in L^p$ this functional is weakly upper semi-continuous.  In the recent work \cite{RossaSerresemiconvaty},  
the space
\begin{equation}
X_p:=\{A\in L^p(\T^d; \mathrm{Sym}^+_d)\colon \diverg A\in \mathcal M (\T^d;\R^d)\}
\end{equation}
is defined together with a weak topology such that $A_k\rightharpoonup A$ in $X_p$ if $A_k\rightharpoonup A$ weakly in $L^p$ and $\diverg A_k\stackrel{*}{\rightharpoonup} \diverg A$ in the sense of measures. Then, the following is proved in~\cite{RossaSerresemiconvaty}:
\begin{theorem}
	Let $p>\tfrac{d}{d-1}$ and $\{U_k\}_k\subset X_p$ be such that $U_k\rightharpoonup U$ in $X_p$. Then we have 
	\begin{equation}
	\limsup_k \int_{\T^d} (\det U_k(x))^{\frac{1}{d-1}}\d x \le \int_{\T^d} (\det U(x))^{\frac{1}{d-1}}\d x.
	\end{equation}
\end{theorem}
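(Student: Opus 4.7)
The plan is to view the theorem as an application of the paper's main line of reasoning specialised to $\mathcal{A}=\diverg$ (acting row-wise) and $K=\mathrm{Sym}_d^+$: invoke a truncated form of the Main Theorem to obtain weak upper semi-continuity of $\int F_M(U_k)\,\d x$, then remove the truncation by passing $M\to\infty$ via an equi-integrability estimate that relies crucially on the hypothesis $p>d/(d-1)$.

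First I would verify that $-F$, with $F(A)=(\det A)^{1/(d-1)}$, is $K$-$\mathcal{A}$-quasiconvex. Given $\zeta\in K$ and $\phi\in C_c^\infty(\Omega)$ with $\zeta+\mathcal{L}\phi\in K$ a.e.\ on some open $\Omega\subset\T^d$, extend the field $U:=\zeta+\mathcal{L}\phi$ by $\zeta$ outside $\Omega$. The resulting field is a $DPT$ on $\T^d$ with mean $\zeta$, and Theorem~\ref{theorem:Jensensinequality} applied to it yields
\begin{equation}
\frac{1}{|\Omega|}\int_\Omega F(\zeta+\mathcal{L}\phi)\,\d x\le F(\zeta),
\end{equation}
which is precisely the defining inequality for the $K$-$\mathcal{A}$-quasiconvexity of $-F$. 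Since the Main Theorem requires a non-negative integrand, I truncate: set $F_M:=\min\{F,M\}$ and $f_M:=M-F_M$. A brief case distinction according to whether $F(\zeta)\le M$ or $F(\zeta)>M$ shows that $-F_M$ inherits the Jensen inequality above, so $f_M$ is bounded, continuous, non-negative and $K$-$\mathcal{A}$-quasiconvex.

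Applying the argument underlying the Main Theorem to $f_M$ and $(U_k)$, after a reduction to a genuinely $\mathcal{A}$-free sequence as discussed below, produces
\begin{equation}
\limsup_{k\to\infty}\int_{\T^d} F_M(U_k)\,\d x \le \int_{\T^d} F_M(U)\,\d x.
\end{equation}
To remove the truncation I use Hadamard's inequality $F(A)\le C|A|^{d/(d-1)}$ on $K$: since $p>d/(d-1)$, the sequence $(F(U_k))_k$ is uniformly bounded in $L^{p(d-1)/d}$ with exponent strictly greater than $1$, hence equi-integrable. In particular $\sup_k\int(F(U_k)-F_M(U_k))\,\d x\to 0$ as $M\to\infty$, while monotone convergence delivers $\int F_M(U)\,\d x\to\int F(U)\,\d x$. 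Combining these two facts yields $\limsup_k\int F(U_k)\,\d x\le \int F(U)\,\d x$.

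Two obstacles stand out. First, the Main Theorem as stated requires $p>d$, strictly stronger than the $p>d/(d-1)$ assumed here, so its proof scheme must be revisited and adapted to the cone geometry of $\mathrm{Sym}_d^+$ and the bounded truncated integrand $F_M$. Second, $(U_k)\subset X_p$ is only \emph{asymptotically} divergence-free: its divergence is a bounded measure rather than zero, while the Main Theorem is formulated for strictly $\mathcal{A}$-free sequences. The naive correction $U_k\mapsto U_k-W_k$ with $\diverg W_k=\diverg U_k$ typically destroys positive semi-definiteness, so one must instead exploit the non-empty interior of $K$ and use a shift of the form $U_k\mapsto U_k+\epsilon I-W_k^\epsilon$ with $W_k^\epsilon$ carrying the divergence and small in $L^p$, before passing to a diagonal limit in $\epsilon$. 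I expect this preservation of the $K$-constraint under divergence correction to be the most delicate technical point.
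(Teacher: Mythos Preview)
The paper does not contain a proof of this theorem. It is quoted as a result of De~Rosa, Serre and Tione \cite{RossaSerresemiconvaty} in the introductory discussion of compensated integrability, and serves as motivation rather than as a consequence of the paper's methods. Immediately after stating it, the paper remarks that its own general theory, when specialised to $\mathcal A=\diverg$ and $K=\mathrm{Sym}_d^+$, recovers the conclusion only under the stronger hypothesis $p>d$, ``which is sharp for $d=2$''. In particular, for $d\ge 3$ the gap between $p>d/(d-1)$ and $p>d$ is not closed anywhere in the paper.

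Your proposal therefore cannot be checked against a proof in the paper, because there is none. The two obstacles you flag are precisely the ones the paper leaves open: the restriction $p>d$ in the Main Theorem is essential to the argument (it is needed so that the potentials $\Phi_n$ embed compactly into $C^{0,\eps}$ via $W^{1,p}\hookrightarrow C^{0,\eps}$, which in turn is what keeps the cut-off construction inside $K$), and the Main Theorem is formulated for genuinely $\mathcal A$-free sequences rather than for sequences with divergence merely bounded in $\mathcal M$. Neither difficulty is resolved by the paper's techniques, and the paper does not claim otherwise. Your outline for the range $p>d$ with exactly divergence-free $U_k$ is in line with how the paper suggests its framework applies to this example, but extending it to the full statement requires the separate arguments of \cite{RossaSerresemiconvaty}.
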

 Further, for the case of $p=\frac{d}{d-1}$, explicit counterexamples are provided. This completes the picture of upper semi-continuity for this specific functional and $U$ a $DPT$. 

We see that if we let $\mathcal A=\diverg$, then this is a form of $\mathcal A$-quasiconcavity but on a restricted set for $U$, which also has to be symmetric and positive definite. Thus, under strengthening of the assumption on the divergence to $\diverg U_n=0$, this is a specific case of our problem we have set in this paper.
In such a case, we can generate a potential for symmetric divergence-free matrices where for the potential $\Phi $ we have $\|\Phi\|_{W^{l,p}}\le C \|U\|_{L^p}$ for a suitable integer $l$.
 Thus we can apply our general theory to the specific case above and obtain results (although by our current techniques we need to require $p>d$, which is sharp for $d=2$). 


In Section \ref{sectio:Lowersemicontinuity} we will introduce notation and definitions so that we can state our main result (Theorem \ref{theorem:lscoffinfty}). 
We will introduce Young measures  and show that our main theorem can be reduced to showing that there exists appropriate sequences of functions that generate  homogeneous Young measures, similarly to the methods of Fonseca and M\"uller in \cite{FonescaMuelleraquasiconvexity}. 
We will introduce potential operators and potentials and justify an ellipticity bound to gain Sobolev control for all the derivatives (Theorem~\ref{generalpoincare}). 
Finally, in Section \ref{section:Maintheoremproof} we prove the main result by generating the sequences of functions that generate the homogeneous Young measures we need. In the appendix \ref{appendix} we calculate  a specific example of a Raita potential and a different proof for deriving the ellipticity control for the potential and that may be considered of independent interest and could be especially relevant to problems involving domains with a boundary.   


\section{Preliminaries}\label{sectio:Lowersemicontinuity}

We want to show lower semi-continuity of the functional
\begin{equation}
I(U):= \int_{\T^d} f(U(x)) \d x 
\end{equation}
 from $\mathcal{A}$-quasiconvexity of $f$.  Here, $U:\T^d \to \R^N$ and  $f:\R^N\to \R$. 
 By $\mathcal A$ we mean a $k$-homogeneous, linear, differential operator $\mathcal A: C^\infty(\T^d;\R^N)\to C^\infty(\T^d;\R^m)$, 
 \begin{equation}
 \mathcal A := \sum_{|\alpha|=k} \mathcal A^\alpha \partial_\alpha, 
 \end{equation}
 where $\mathcal A^\alpha\in \mathrm{Lin}(\R^N,\R^m)$. Here, we sum over all $d$-dimensional multi-indices $\alpha$ such that $|\alpha|=k$. We define the associated Fourier symbol map 
 \begin{equation}
 \mathcal A [\xi] := \sum_{|\alpha|=k} \xi^\alpha \mathcal A^\alpha 
 \end{equation}  
 for $\xi\in \R^d$. Here $\mathcal A$ is assumed to satisfy the constant rank property, i.e.\ there exists an $r\in \N$ such that $\mathrm{Rank} \mathcal A[\xi]=r $ for all $\xi\in \R^d\setminus \{0\}$.
 
 
   We will have extra convex restrictions on the set of values our functions can take. The usual form of $\mathcal {A}$-quasiconvexity as stated in \cite{FonescaMuelleraquasiconvexity} reads as follows:
\begin{definition}\label{def:Aquasiconvexityfonsecamueller}
	A function $f:\R^N\to \R$ is said to be 
	 $\mathcal {A}$-quasiconvex if
	\begin{equation}
	 f(\zeta)\le \int_{\T^d} f(\zeta+w(x))\d x
	\end{equation}
	for all $\zeta\in \R^N$ and for all $w\in C^\infty(\T^d;\R^N)$ such that $\mathcal{A}w=0$ and $\int_{\T^d} w(x)dx=0$.
\end{definition} 

(Recall that the torus has unit volume.) Let $K$ be a (not necessarily bounded) convex subset of $\R^N $ with non-empty interior (i.e.\ a convex set of full dimension, so to speak). We can add $K$ into the definitions of  $\mathcal {A}$-quasi convexity to obtain a convexly restricted form. For this, we assume there exists a potential operator for $\mathcal A$, i.e.\ a linear homogeneous differential operator $\mathcal B$ of order $l$, defined on smooth periodic functions of zero mean, with $\image \mathcal B=\ker \mathcal A$. Owing to~\cite{RaitaPotentials}, this will be the case if and only if $\mathcal A$ is of constant rank.  
\begin{definition}\label{def:ConvexAquasiconvexity}
	A function $f\colon K\to \R$ is $K$-$\mathcal {A}$-quasi convex if
	\begin{equation}\label{eq:PotentiaconvexAquasiconvexity}
	f(\zeta) \le  \int_{\T^d} f(\zeta +U(x)) \d x 
	\end{equation}
	for all 
	$U\in C^\infty(\T^d)$ with $\int_{\T^d}U(x)dx=0$ and $\zeta + U(x)\in K$ for almost every $x$ and $\mathcal{A} U=0$, or, equivalently, if
	\begin{equation}
	\quad  \quad f(\zeta) \le \int_{\T^d} f(\zeta +\mathcal{B}\Phi(x)) \d x
	\end{equation}
	for all $\Phi \in C^\infty (\T^d)$ with 
	$\zeta+\mathcal{B} \Phi(x)\in K$ for almost every  $x$.  
\end{definition}

\begin{remark}
	As in \cite{FonescaMuelleraquasiconvexity} Remark 3.3 (ii), if $f$ is upper semi-continuous and locally bounded above and $f(V)\le C(1+|V|^p)$ for all $V\in K$ for some $C>0$, then one can replace $C^\infty$ by $L^p$ in \eqref{eq:PotentiaconvexAquasiconvexity}.  
\end{remark}

We see that this is the same definition with potentials as in \cite{RaitaPotentials}  except we have asked for 
 the restriction to the set $K$.
\begin{definition}
We will define a function $U\in L^1_{loc}(\T^d)$ to be $K$-$\mathcal A$-free if  
 $U (x) \in K$ for almost every $x$ and $U$ is weakly $\mathcal A$-free, that is, $\mathcal A U=0$ in the sense of distributions. 
\end{definition}

%
%
%

Here we want to prove lower semi-continuity of an $f$ that is $K$-$\mathcal {A}$-quasiconvex for weakly converging sequences of functions $U_n$ which are $K$-$\mathcal A$-free. 
\begin{theorem}[Main Theorem]
\label{theorem:lscoffinfty}
	Let $p>d$ and
	 $f: K \to \Rbb$ be continuous and satisfy the growth bound  $|f(z)|\le C(1+|z|^r) $ for some $C>0$ and some $r< p$. 
	Then  
	\begin{equation}
	\liminf_{n\to \infty} \int_{\T^d} f(U_n) \d x \ge \int_{\T^d} f(U)\d x 
	\end{equation}
	for every sequence $(U_n)\in L^p(\T^d)$ of $K$-$\mathcal A$-free maps on $\Omega$ that converges weakly in $L^p$ to $U\in L^p(\T^d)$
	if and only if $f$ is $K$-$\mathcal {A}$-quasiconvex.
\end{theorem} 


\begin{remark}
		Consider a Carath\'eodory integrand with explicit dependence on the domain, i.e., $f:\T^d\times K \to \R  $ measurable in the first and continuous in the second variable. Under our previous assumptions on the second variable one can even treat such integrands.  
\end{remark}


We will be using Young measures (parametrised probability measures) $\nu_x$ for $x\in \T^d$. These are useful objects that hold more information about the limit of weakly converging sequences of functions and a natural tool when discussing lower semicontinuity. We will state a version of the fundamental theorem of  Young measures which demonstrates this link.  Below is  a modified version of the theorem by Fonseca and M\"uller in \cite{FonescaMuelleraquasiconvexity}, see also\cite{RaitaPotentials}. 

\begin{theorem}[Fundamental Theorem on Young Measures] \label{theorem:Fundamentalt theroem of young measures}
	Let $\Omega \subset \R^d$ be a measurable set of finite measure and let  $\{z_n \}$ be a sequence of measurable functions $\Omega\to\R^N$ such that the sequence does not lose mass at infinity, i.e., $\lim_{M\to\infty} \sup_n |\{|z_n|\ge M \}|=0.$ 
	 Then there exist a subsequence (not relabelled) and a weakly*-measurable map\footnote{This means that $x\mapsto\int_{\R^N} f(z)d\nu_x(z)$ is measurable for every $f\in C_0(\R^N)$.} $\nu:\Omega\to \mathcal P(\R^N)$   such that the following hold:
	\begin{enumerate}
		\item let $D\subset \R^N$ be a compact subset then $\supp \nu_x \subset D$ for almost every $x\in \Omega$ if and only if  
		$\dist (z_n,D) \to 0 $ in measure;
		\item if $f:\Omega \times \R^N \to \R$ is a normal integrand  (Borel measurable in the first and lower semicontinuous in the second variable), bounded from below, then 
		\begin{equation}
		\liminf_{n\to \infty} \int_\Omega f(x,z_n(x))\d x \ge \int_\Omega \bar{f} (x) \d x \quad \text{where}\quad \bar{f} (x):= \langle \nu_x, f(x,\cdot  )\rangle =\int_{\R^N} f(x,y)\d {\nu_x(y)};
		\end{equation} 
		
		\item  If $f$ is Carath\'eodory ($f$ and $-f$ are normal integrands), then 
		\begin{equation}
		\lim_{n\to\infty } \int_{\Omega} f(x,z_n(x))\d x =\int_{\Omega} \bar f (x) \d x <\infty 
		\end{equation} 
		if  $\{f(z_n({\cdot})) \}$ is equi-integrable. In this case $f(\cdot, z_n(\cdot ))\rightharpoonup \bar f $ in $L^1(\Omega)$. \label{pointmeasurelimitforf}
 	\end{enumerate}  
\end{theorem}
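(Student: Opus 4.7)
The plan is to follow the strategy of Ball and of Fonseca--Müller, namely to obtain $\nu$ via weak-$*$ compactness of parametrised measures and then deduce the three properties by approximation. First I would view each $z_n$ as defining the Young-type map $x \mapsto \delta_{z_n(x)} \in \mathcal P(\R^N)$, which lies in the unit ball of $L^\infty_{w^*}(\Omega;\mathcal M(\R^N))$, identified as the dual of the separable space $L^1(\Omega;C_0(\R^N))$. Sequential weak-$*$ compactness of this unit ball yields a subsequence and a weak-$*$ measurable limit $\nu : \Omega \to \mathcal M(\R^N)$ with $\|\nu_x\|_{\mathrm{TV}}\le 1$. To upgrade $\nu_x$ to a probability measure a.e., I would test against cut-offs $\chi_M\in C_c(\R^N)$ with $\chi_M=1$ on $B_M$; the non-concentration hypothesis $\lim_{M\to\infty}\sup_n|\{|z_n|\ge M\}|=0$ gives $\int_\Omega\langle\nu_x,\chi_M\rangle\,\di x \to |\Omega|$, whence $\langle\nu_x,1\rangle=1$ for a.e.\ $x$.

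For (1), set $\phi_D(z):=\min(\dist(z,D),1)\in C_b(\R^N)$. Weak-$*$ convergence gives $\int_\Omega\phi_D(z_n)\,\di x\to\int_\Omega\langle\nu_x,\phi_D\rangle\,\di x$. If $\dist(z_n,D)\to0$ in measure, the left-hand side tends to $0$ by bounded convergence, forcing $\langle\nu_x,\phi_D\rangle=0$ and hence $\supp\nu_x\subset D$ a.e.; conversely, if $\supp\nu_x\subset D$ a.e.\ then the integral tends to $0$, so $\phi_D(z_n)\to 0$ in $L^1$, and compactness of $D$ (via $\phi_D(z)\ge \min(\eta,1)$ on $\{\dist(z,D)\ge\eta\}$) converts this into convergence of $\dist(z_n,D)$ in measure.

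For (2), I would first establish the identity $\int_\Omega g(x,z_n(x))\,\di x\to\int_\Omega\langle\nu_x,g(x,\cdot)\rangle\,\di x$ for Carath\'eodory $g$ with $g(x,\cdot)\in C_0(\R^N)$ by a density argument based on simple integrands $\sum_i \mathbf 1_{A_i}(x) g_i(z)$ with $g_i\in C_0(\R^N)$. For a general normal integrand $f\ge 0$ (after shifting by the lower bound), I would write $f$ as the monotone pointwise supremum of a countable family $\{f_k\}$ of continuous integrands with compact support in $z$ (a standard approximation from below for normal integrands), apply the previous identity to each $f_k$, and pass to the supremum by monotone convergence on both sides; a further truncation in $z$ handles the fact that $f$ need not be $C_0$ in the second variable, since the truncation is taken from below the liminf is preserved.

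For (3), Carath\'eodory-ness applied to $\pm f$ together with equi-integrability and (2) will give the result via Vitali's theorem: if $\{f(\cdot,z_n)\}$ is equi-integrable, then after passing to a subsequence it converges weakly in $L^1$ to some $h$, and applying (2) to both $f$ and $-f$ (the latter after shifting by equi-integrability to a bounded-below integrand, using Chacon's biting argument if needed) identifies $h=\bar f$, so the whole sequence converges. Conversely, if $\lim_n\int f(x,z_n)\,\di x=\int\bar f\,\di x<\infty$, I would apply (2) to the truncations $f\wedge M$ and subtract to obtain $\int_\Omega(f(x,z_n)-f(x,z_n)\wedge M)\,\di x\to 0$ uniformly in $n$, which is precisely equi-integrability. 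The main obstacle I expect is the tightness step ensuring $\nu_x\in\mathcal P(\R^N)$ a.e.\ and, in (3), correctly handling the non-$C_0$ tail of $f$ so that the identification of the weak $L^1$ limit with $\bar f$ survives in full generality; everything else is a careful but standard unwinding of weak-$*$ convergence against the appropriate classes of test integrands.
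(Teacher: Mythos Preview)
The paper does not actually prove this theorem: it is stated as background, with the proof outsourced to Fonseca--M\"uller \cite{FonescaMuelleraquasiconvexity} (and Rai\c{t}\u{a} \cite{RaitaPotentials}). So there is no ``paper's own proof'' to compare against; the authors simply quote the result.

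Your sketch is the standard Ball/Fonseca--M\"uller argument and is essentially correct. One small point worth tightening: the weak-$*$ limit you extract is in the duality $\bigl(L^1(\Omega;C_0(\R^N))\bigr)^*$, so convergence is initially only against test functions in $C_0$, not $C_b$. When in part~(1) you test against $\phi_D(z)=\min(\dist(z,D),1)\in C_b\setminus C_0$, you are implicitly using the tightness hypothesis $\lim_{M\to\infty}\sup_n|\{|z_n|\ge M\}|=0$ a second time (to pass from $C_0$ to $C_b$ test functions, or equivalently to replace $\phi_D$ by $\chi_M\phi_D\in C_0$ and let $M\to\infty$). This is routine, but worth making explicit. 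The rest of your outline---monotone approximation of normal integrands from below for~(2), and the Vitali/truncation argument for the equivalence in~(3)---is the standard route and matches what is done in the cited references.
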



Here $\nu_x$ is the Young measure generated by the sequence $\{z_n \}$. The Young measure is {\em homogeneous} if there is a Radon measure $\nu\in \mathcal M(\R^N)$ such that $\nu_x=\nu$ for almost every $x\in \Omega$. The growth bound  $|f(z)|\le C(1+|z|^r) $ for  $C>0$, $r< p$  is assumed so that   $\{f(z_n) \}$ is equi-integrable. 
If $\{z_n \}$ is bounded in $L^p$, $f$ is continuous and we assume the growth bound then $f(z_n)\rightharpoonup \bar f$ in $L^{\frac{p}{r}}$. If $\{z_n \}$  is equi-integrable, then letting $f=\mathrm{id} $ we see that $z_n\rightharpoonup \bar z$ in $L^1(\Omega)$ where $\bar z (x):= \langle \nu_x,\mathrm{id} \rangle$.   


\begin{theorem}\label{theorem:youngmeasurejensenp}
	Let $p>d$ and 
	$\{U_n\}$ be a weakly converging sequence in $L^p(\T^d)$ that is $K$-$\mathcal A$-free  
	and generates the Young measure $\nu=\{\nu_x\}_{x\in \T^d}$. 
	Then for almost every $a\in \T^d$ there exists a 
	sequence $\bar U_n:\T^d\to\R^N$ that is $K$-$\mathcal A$-free, has mean $\langle\nu_a,\id\rangle$, and generates the homogeneous Young measure $\nu_a$.
\end{theorem}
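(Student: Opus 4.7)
The plan is to adapt the blow-up and diagonalization scheme of Fonseca and M\"uller to the convexly constrained setting, the extra burden being that the periodic, $\mathcal{A}$-free modifications must remain in $K$. First, I would fix a full-measure set of ``good'' points $a\in\Omega$: Lebesgue points of $U$, Lebesgue points of $x\mapsto \langle\nu_x,g\rangle$ for $g$ in a countable dense subset of $C_0(\R^N)$, and points where $|U_n|^p$ does not concentrate. At such an $a$, the homogeneous measure $\nu_a$ is well defined with $\langle\nu_a,\mathrm{id}\rangle = U(a)$.

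Second, blow up: for $r>0$ set $V_n^r(y):=U_n(a+ry)$ on the unit cube $Q:=(-\tfrac12,\tfrac12)^d$. Each $V_n^r$ is $K$-$\mathcal{A}$-free on $Q$ (translation and dilation preserve both the convex constraint and the homogeneous operator $\mathcal{A}$) and is uniformly bounded in $L^p$. Standard arguments, using Lebesgue differentiation for the maps $x\mapsto\langle\nu_x,g\rangle$, show that the Young measure generated by $\{V_n^r\}_n$ tends, after averaging over $Q$, to the homogeneous measure $\nu_a$ as $r\to 0$. A metrizable diagonal extraction against the countable test family then produces $r_n\downarrow 0$ such that $\tilde V_n(y):=V_n^{r_n}(y)$ generates $\nu_a$ on $Q$.

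Third, and this is the main obstacle, one must periodize the $\tilde V_n$ while preserving $\mathcal{A}$-freeness and $K$-valuedness simultaneously. Here I would invoke the potential operator $\mathcal{L}$ together with the ellipticity estimate $\|\Phi\|_{W^{k,p}}\le C\,\|\tilde V_n-U(a)\|_{L^p}$ (Theorem~\ref{generalpoincare}) to write $\tilde V_n - U(a) = \mathcal{L}\Phi_n$ with $\Phi_n$ uniformly bounded in $W^{k,p}(Q)$. Because $p>d$, Sobolev embedding supplies uniform $C^0$ control on $\Phi_n$ and its lower-order derivatives, which is precisely the leverage the convex constraint demands. Multiplying $\Phi_n$ by a smooth cutoff $\chi_\delta$ vanishing in a thin annulus near $\partial Q$, the field $\bar U_n := U(a) + \mathcal{L}(\chi_\delta \Phi_n)$ extends $\T^d$-periodically and remains $\mathcal{A}$-free. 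On the bulk $\bar U_n$ coincides with $\tilde V_n\in K$; on the annulus the commutator $\mathcal{L}(\chi_\delta\Phi_n) - \chi_\delta\mathcal{L}\Phi_n$ consists of lower-order derivatives of $\Phi_n$ multiplied by derivatives of $\chi_\delta$, controlled uniformly in $L^\infty$ via $p>d$. Since $K$ has nonempty interior, and by a preliminary reduction $U(a)\in\mathrm{int}(K)$ on a dense set of $a$'s (boundary points being handled by a separate approximation), a diagonal choice $\delta = \delta(n)$ keeps $\bar U_n$ inside $K$; a final subtraction of a constant enforces the stated mean condition on the oscillation about $U(a)$.

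Finally, one must verify the remaining properties. The $p$-equi-integrability follows from the uniform $L^p$-bound on $\tilde V_n$ and the fact that the cutoff correction is concentrated on a thin annulus of vanishing measure; since the modification affects $\tilde V_n$ only on such a vanishing set, part (1) of Theorem~\ref{theorem:Fundamentalt theroem of young measures} together with a standard subsequence argument ensures that $\bar U_n$ still generates the homogeneous Young measure $\nu_a$. The hard part is unambiguously Step 3: naive Fonseca--M\"uller truncation does not in general preserve $K$, and the whole argument rests on exploiting the Sobolev regime $p>d$ to trade $L^p$-bounds on the potential for $L^\infty$-bounds on the field, so that convexity of $K$ can absorb the commutator error.
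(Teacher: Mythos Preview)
Your overall strategy---blow up, pass to potentials, cut off, and exploit $p>d$ to trade $L^p$ bounds on the potential for $L^\infty$ control of lower-order derivatives---is the right one and matches the paper. But there is a genuine gap in Step~3, and the order of operations matters.

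You propose to write $\tilde V_n - U(a) = \mathcal{L}\Phi_n$ on the cube $Q$ by invoking Theorem~\ref{generalpoincare}. That theorem, however, only furnishes potentials for \emph{periodic}, mean-zero, $\mathcal{A}$-free maps on $\T^d$. After blow-up, $\tilde V_n - U(a)$ is merely $\mathcal{A}$-free on $Q$; its periodic extension has jumps across $\partial Q$ and is not $\mathcal{A}$-free on the torus, so the theorem does not apply. This is a circularity: you want the potential in order to periodise, but the only potential construction available already requires periodicity. The paper avoids this by reversing the order: it applies Theorem~\ref{generalpoincare} on the \emph{original} (periodic) domain to get $\mathcal{L}\Phi_n = U_n - U$, and only then blows up, setting
\[
U_{j,R,n}(x)\;=\;\mathcal{L}\bigl[\chi_j(x)\,R^{-l}\Phi_n(a+Rx)\bigr]\;+\;U(a+Rx).
\]
Note the base is $U(a+Rx)$, not the constant $U(a)$, so that the leading part is the convex combination $\chi_j\,U_n(a+Rx)+(1-\chi_j)\,U(a+Rx)$ of two $K$-valued functions.

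A second, related weakness: arranging only $U(a)\in\mathrm{int}(K)$ is not enough of a buffer. In the transition layer your convex combination $\chi_\delta\,\tilde V_n+(1-\chi_\delta)\,U(a)$ is at distance merely $(1-\chi_\delta)\,\dist(U(a),\partial K)$ from $\partial K$, which tends to zero as $\chi_\delta\to 1$ while the commutator need not vanish there; your ``diagonal choice $\delta=\delta(n)$'' does not address this. The paper instead first contracts the whole sequence toward a fixed interior point via $V_n:=(1-\tfrac1n)(U_n-Y)+Y$, so that \emph{both} endpoints of the convex combination sit at a uniform positive distance from $\partial K$; the commutator then only has to beat a fixed threshold, which the uniform convergence $\partial^\beta\Phi_n\to 0$ for $|\beta|\le l-1$ (from $p>d$ and compact embedding) delivers.
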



\begin{lemma}
	To prove the sufficiency part of Theorem \ref{theorem:lscoffinfty} 
	it is enough to show Theorem \ref{theorem:youngmeasurejensenp}. 
\end{lemma}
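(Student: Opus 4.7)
My plan is to follow the classical Young-measure reduction of Fonseca--M\"uller, adapted here to the convex restriction. The idea is to turn the asserted global lower semi-continuity into a pointwise Jensen-type inequality for the Young measure $\nu$ generated by $(U_n)$, and then verify this pointwise inequality by inserting the generating sequences of Theorem \ref{theorem:youngmeasurejensenp} into the definition of $K$-$\mathcal A$-quasiconvexity of $f$.

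Passing to a (non-relabelled) subsequence, Theorem \ref{theorem:Fundamentalt theroem of young measures} produces a Young measure $\nu=\{\nu_x\}_{x\in\Omega}$ generated by $(U_n)$; no mass escapes to infinity because of the $L^p$-bound. The growth $0\le f(z)\le C(1+|z|^r)$ with $r<p$ and the $L^p$-boundedness of $(U_n)$ make $\{f(U_n)\}$ bounded in $L^{p/r}$, hence equi-integrable, so part (3) of the fundamental theorem gives
\begin{equation*}
\liminf_{n\to\infty}\int_\Omega f(U_n)\d x=\int_\Omega \langle\nu_x,f\rangle\d x,
\end{equation*}
and, applied to the identity, identifies $\langle\nu_x,\mathrm{id}\rangle=U(x)$ almost everywhere (using $U_n\rightharpoonup U$ in $L^p$). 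It therefore suffices to prove the pointwise Jensen inequality $f(U(a))\le\langle\nu_a,f\rangle$ for almost every $a\in\Omega$.

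Fix such an $a$ at which Theorem \ref{theorem:youngmeasurejensenp} applies, and let $\bar U_n:\T^d\to\R^N$ be the $p$-equi-integrable, $K$-$\mathcal A$-free, mean-zero periodic sequence generating the homogeneous Young measure $\nu_a$ supplied by that theorem; by construction $\bar U_n$ is a $\mathcal A$-free periodic fluctuation around $U(a)$ with $U(a)+\bar U_n\in K$ almost everywhere, and so is an admissible competitor in the periodic formulation of Definition \ref{def:ConvexAquasiconvexity}, upgraded from $C^\infty_c$ to $L^p$ via the remark following that definition (applicable thanks to the $r$-growth and the $L^p$-bound on $\bar U_n$). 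Testing $K$-$\mathcal A$-quasiconvexity of $f$ at $\zeta=U(a)$ gives
\begin{equation*}
f(U(a))\le\frac{1}{|\T^d|}\int_{\T^d}f\bigl(U(a)+\bar U_n(x)\bigr)\d x,
\end{equation*}
and the $r$-growth together with $p$-equi-integrability makes $\{f(U(a)+\bar U_n)\}$ equi-integrable, so part (3) of the fundamental theorem applied to the homogeneous Young measure generated by $U(a)+\bar U_n$ passes to the limit on the right and produces $\langle\nu_a,f\rangle$, which is exactly the pointwise Jensen estimate needed.

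The main obstacle is not this reduction, which is fairly standard once both the Young measure and a good periodic generating sequence are in hand, but the construction of that generating sequence in Theorem \ref{theorem:youngmeasurejensenp} itself: producing a $K$-valued, $\mathcal A$-free periodic approximation of $\nu_a$ that preserves both homogeneity and $p$-equi-integrability is where the convex constraint genuinely interferes with the usual blow-up and cut-off procedures, and this is precisely where the integrability hypothesis $p>d$ and the Sobolev control on potentials developed in the following sections (cf.\ Theorem \ref{generalpoincare}) will become essential.
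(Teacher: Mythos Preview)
Your reduction is the same as the paper's: pass to a Young measure, reduce to a pointwise Jensen inequality $f(\langle\nu_a,\id\rangle)\le\langle\nu_a,f\rangle$, and verify the latter by feeding the generating sequence from Theorem~\ref{theorem:youngmeasurejensenp} into the $K$-$\mathcal A$-quasiconvexity of $f$. That overall scheme is correct.

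There is, however, a shift error in the final step. By the definition used in the paper, ``$\bar U_n$ is $K$-$\mathcal A$-free'' means $\bar U_n(x)\in K$ a.e.\ and $\mathcal A\bar U_n=0$; and Theorem~\ref{theorem:youngmeasurejensenp} says that \emph{$\bar U_n$ itself} generates $\nu_a$. Hence the correct competitor in Definition~\ref{def:ConvexAquasiconvexity} at $\zeta=U(a)$ is $w_n:=\bar U_n-U(a)$ (which is $\mathcal A$-free with $\zeta+w_n=\bar U_n\in K$), giving
\[
f(U(a))\le\frac{1}{|\T^d|}\int_{\T^d} f(\bar U_n)\d x\;\xrightarrow[n\to\infty]{}\;\langle\nu_a,f\rangle,
\]
exactly as the paper does. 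Your choice of $\bar U_n$ as the competitor yields $\frac{1}{|\T^d|}\int f(U(a)+\bar U_n)\d x$ on the right, and since $\bar U_n$ generates $\nu_a$, this converges to $\int_{\R^N} f(U(a)+\xi)\d\nu_a(\xi)$, not to $\langle\nu_a,f\rangle$; your claim that ``the right produces $\langle\nu_a,f\rangle$'' is therefore not justified. (The ``mean zero'' clause in the statement of Theorem~\ref{theorem:youngmeasurejensenp} is somewhat misleading here; what matters for the Lemma is that $\bar U_n\in K$ and $\bar U_n$ generates $\nu_a$.) Once you correct the competitor to $\bar U_n-U(a)$, your argument coincides with the paper's.
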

\begin{proof}
	Let $U_n$ be a sequence that is $K$-$\mathcal A$-free, 
	converges weakly in $L^p$ to $U$, and generates the Young measure $\nu_x$. Let $a\in\T^d$ be such that the conclusion of Theorem~\ref{theorem:youngmeasurejensenp} holds at $a$.  
	From Theorem~\ref{theorem:youngmeasurejensenp} 
	there exists a weakly converging sequence $\bar U_n$ that is $K$-$\mathcal A$-free 
	and generates the homogeneous Young measure $\nu_a$. As $\{\bar{U_n}\}$ is bounded in $L^p$ and $f$ grows slower than the $p$-th power, the sequence $\{f(\bar U_n)\}$ is equi-integrable, so we may apply the Fundamental Theorem~\ref{theorem:Fundamentalt theroem of young measures}(3) to find that 
	\begin{align}
	\langle \nu_a,f\rangle& =\int_{\T^d}\langle \nu_a,f\rangle \d x = \lim_{n\to\infty} \int_{\T^d} f(\bar U_n)\d x \\&\ge \lim_{n\to\infty} f \left(U(a) \right) = f(\langle \nu_a, \id \rangle).
	\end{align}  
	Here we used the homogeneity of the Young measure for the first equality, the properties of $f$ to see that $\{f(\bar U_n)\}$ is equi-integrable and Theorem~\ref{theorem:Fundamentalt theroem of young measures} part \eqref{pointmeasurelimitforf} for the second 
	 equality, 
	and finally definition \ref{def:ConvexAquasiconvexity} and the remark thereafter for the middle inequality. 
	 Thus, we see that $\langle \nu_x,f\rangle\ge f(\langle \nu_x, \id \rangle)$ for almost every $x$ and so 
	\begin{equation}
	\liminf_{n\to \infty} \int_{\T^d} f( U_n) \d x =\int_{\T^d}\langle \nu_x,f \rangle \d x \ge \int_{\T^d} f(\langle \nu_x,\id \rangle) \d x =\int_{\T^d} f( U)\d x, 
	\end{equation} 
	 where again we used Theorem~\ref{theorem:Fundamentalt theroem of young measures} part \eqref{pointmeasurelimitforf} for the first and last equalities. 
\end{proof}

 Were it not for the pointwise convex constraint, we could use the theory of Fonseca and M\"uller \cite{FonescaMuelleraquasiconvexity} to prove Theorem~\ref{theorem:youngmeasurejensenp}. 
 However, in order to generate the appropriate Young measure, they use a projection onto $\mathcal{A}$ free elements 
which is formulated using Fourier techniques. This could destroy the convex constraint on the sequence and thus invalidate Definition~\ref{def:ConvexAquasiconvexity}. 
Instead of using a (nonlocal) projection operator, we will focus on potential operators (which are local).


We will define a sequence of functions that need  to be smoothly cut off, and if this is naively done, it will  
invalidate the $\mathcal A$-free condition. In order to keep the sequence $\mathcal A$-free, we will perform the cut-off at the level of the potential functions and then apply the potential operator to  ensure we remain   $\mathcal A$-free. 

If this is done, as in~\cite{FonescaMuelleraquasiconvexity}, by means of nonlocal Fourier projectors, however, we lose any pointwise control in the cutoff region, and may thus violate the pointwise constraint $U\in K$. Instead, we will focus on 
potential operators as they 
  act locally, and thus we can perform estimates uniformly to keep the convex constraint.  


 From the work of Raita \cite{RaitaPotentials} for any linear, homogeneous differential operator $\mathcal A $ of constant rank and constant coefficients, there exists another such operator $\mathcal B$ of order, say, $l$, such that $\ker \mathcal A [\xi] =\image \mathcal B [\xi] $ for all $\xi\in \R^d\setminus\{0\} $.   Thus, on the torus, for any smooth function $U$ with $\mathcal{A} U=0$ and zero mean, we can find a smooth potential $\Phi$ such that $\mathcal{B} \Phi = U$. 

In order to use the construction in \cite{RaitaPotentials} for our method, we need the property that $\|\Phi\|_{W^{l,p}}\le C \|U\|_{L^p}$ for a suitable choice of $\Phi$. Using Fourier techniques from \cite{RaitaPotentials}, if $\mathcal A U =0$ in $\T^d$ and $\hat U (0)=0$ then 
\begin{equation}
\Phi(x) = \sum_{\xi \neq 0} \mathcal B^\dagger (\xi) \hat U(\xi) \mathrm{e}^{2\pi \mathrm{i} x\cdot \xi },
\end{equation}
where $\mathcal B ^\dagger$ is defined in \cite{RaitaPotentials} and the appendix \ref{appendix}. With a simple use of Fourier multiplier theorems from \cite{FonescaMuelleraquasiconvexity} we obtain that that $\|\Phi\|_{W^{l,p}}\le C \|U\|_{L^p}$.


\begin{theorem}\label{generalpoincare}
	Let $1<p<\infty$ and let $\mathcal A$ and $ \mathcal B$  be linear, homogeneous, differential operators, with constant rank and constant coefficients and assume that 
	\begin{equation}
	\ker \mathcal A[\xi]= \image \mathcal B [\xi] \quad \mathrm{and }\quad \ker \mathcal B[\xi]= \image \mathcal G [\xi] 
	\end{equation}
	for all $\xi\in \R^d\setminus \{0\}$. Then there exists a constant $C$ such that for all
	$U\in L^p(\T^d)$ 
	such that $\mathcal A U=0$ and $\int_{\T^d}U \d x =0$, there exists a 
	$\Phi\in W^{l,p}(\T^d)$ 
	(where $l=2k$ and $k$ is the homogeneity of $\mathcal A$) 
	such that
	 $\|\Phi\|_{W^{l,p}}\le C\|U\|_{L^p}$.
\end{theorem}

An explicit example (for  $\mathcal A=\diverg$ in two dimensions) is given in Appendix \ref{exaple:RaitaDivergence}. 

\begin{remark}\label{bdddomain}
\begin{enumerate}
\item Only on the torus is it true that for every $\mathcal A$-free tensor $U$ there exists a potential $\Phi$ such that $\mathcal B\Phi=U$. On a bounded domain or on $\R^d$, this is no longer the case: Consider the constraint $\Delta U=0$, which satisfies the constant rank condition. It is not difficult to compute $\mathcal B=0$ for the corresponding potential operator obtained from~\cite{RaitaPotentials}. While, by Liouville's Theorem, indeed every harmonic function on the torus is identically zero, this is of course not the case on a bounded domain, or on the whole space. Since our method relies heavily on the existence of such a potential, it is, in the stated generality, restricted to periodic boundary conditions.
\item Let us, however, indicate two ways to make statements in the spirit of Theorem~\ref{theorem:lscoffinfty} on certain domains. First, on any open set $\Omega\subset\R^d$, if we \emph{assume} the sequence to be obtained from a potential, then an analogous result to Theorem~\ref{theorem:lscoffinfty} holds. This amounts to considering the functional $\Phi\mapsto\int_{\Omega}f(\mathcal B \Phi)\dd x$ instead of $U\mapsto\int_{\T^d}f(U)\dd x$. Such functionals were considered recently in~\cite{CG2020}.
\item For specific constraints $\mathcal A$, we can obtain a potential by Poincar\'e's Lemma, as long as the domain is contractible. To demonstrate this, let us consider the case of a symmetric divergence-free matrix field $U:\Omega\to\R^{2\times 2}$. Writing \begin{equation*}
    U=\begin{pmatrix} u & v\\ 
    v & w
    \end{pmatrix},
    \end{equation*}
we see that $U$ is symmetric and divergence-free if and only there exist $\phi,\psi: \Omega\to\R$ such that 
\begin{equation*}
    (u,v)=(-\partial_2\phi,\partial_1\phi),\quad (v,w)=(-\partial_2\psi,\partial_1\psi),
\end{equation*}
which in turn is the case if and only if $\partial_1\phi=-\partial_2\psi$, which holds if and only if there exists $\Phi:\Omega\to\R$ such that $(\phi,\psi)=(-\partial_2\Phi,\partial_1\Phi)$.

In summary, $U$ is symmetric and divergence-free if and only if there exists $\Phi$ such that 
\begin{equation*}
    u=\partial_2^2\Phi,\quad v=-\partial_1\partial_2\Phi,\quad w=\partial_1^2\Phi.
    \end{equation*}
Similar, but more tedious computations work in higher dimensions.

We have thus found a potential that allows us to prove Theorem~\ref{theorem:lscoffinfty} on any contractible domain for the special case of DPTs.
	\end{enumerate}
\end{remark}

\section{Proof of Main Theorem}\label{section:Maintheoremproof}


Having developed all the tools needed, we can now prove the main theorem. 

\begin{proof}[Proof of Theorem \ref{theorem:lscoffinfty}]

\emph{Necessity.} The easier part of the proof is the necessity of $K$-$\mathcal A$-quasiconvexity for lower semicontinuity. So suppose
\begin{equation}
	\liminf_{n\to \infty} \int_{\T^d} f(U_n) \d x \ge \int_{\T^d} f(U)\d x 
	\end{equation}
	for every sequence $(U_n)\in L^p(\T^d)$ of $K$-$\mathcal A$-free maps on $\Omega$ that converges weakly in $L^p$ to $U\in L^p(\T^d)$. We need to show that $f$ is $K$-$\mathcal A$-quasiconvex. 
	
	To this end, let $\zeta\in\R^N$ and $U\in C^\infty(\T^d)$ with zero mean, such that $\zeta+U\in K$ almost everywhere. Set
	\begin{equation}
	U_n(x):=\zeta+U(nx)   
	    \end{equation}
	which converges weakly to the constant $\zeta$ and satisfies the requirements for lower semicontinuity. Thus, using a simple variable transformation,
	\begin{equation}
	    \int_{\T^d}f(\zeta+U(x))\dd x=\liminf_{n\to\infty}\int_{\T^d}f(U_n(x))\dd x\geq f(\zeta),
	    \end{equation}
	    as claimed.

\emph{Sufficiency.} It suffices to prove Theorem~\ref{theorem:youngmeasurejensenp}.

First, we can assume that $U_n$ is bounded away uniformly from $\partial K$. Indeed, take $Y\in \mathrm{Int}(K)$ (which is non-empty by assumption) and define 
\begin{equation}
V_{n,\delta}:= (1-\delta)(U_n-Y)+Y.
\end{equation}
 This has the effect of shrinking the codomain of $U_n$  from $K$ into  
 $K_{\delta}$  where 
 \begin{equation}
K_{\delta}:=\{x\in K:\dist(x,\partial K)\geq\delta \}
\end{equation}
for some ${\delta}>0$.
  We see that $\mathcal A V_{n,\delta}=(1-\delta)\mathcal A U_n=0$ for all $n$, $V_{n,\delta}\in L^p$ and by using \cite[Proposition 2.4]{FonescaMuelleraquasiconvexity} we see that $V_{n,\delta}$ will generate (in the parameter $n$) a Young measure that converges, as $\delta\to0$, weakly* in\footnote{We denote by $L_w^\infty(\Tbb^d;\mathcal M(\Rbb^N))$ the space of weakly*-measurable maps from $\Tbb^d$ to the space of finite measures on $\R^N$. It is dual to $L^1(\Tbb^d; C_0(\Rbb^N))$.} $L_w^\infty(\Tbb^d;\mathcal M(\Rbb^N))$ to the Young measure generated by $U_n$. Using a diagonal argument, we see that Theorem~\ref{theorem:youngmeasurejensenp} is true if it is true for all sequences that are bounded away from $\partial K$.  
   
Secondly, we may assume without loss of generality that $\int_{\T^d}U_n \d x = \int_{\T^d}U \d x$ for all $n\in\N$: Otherwise, consider instead the sequence
\begin{equation*}
W_n:=U_n- \int_{\T^d}(U_n-U) \d x
\end{equation*}
and note that $\int_{\T^d}W_n \d x=\int_{\T^d}U \d x$ for all $n\in\N$, that by the weak convergence the integral of $U_n-U$ converges to zero and therefore $W_n$ and $U_n$ generate the same Young measure, and that $W_n$ will take values in the interior of $K$ for sufficiently large $n$. Also, $W_n$ remains $\mathcal A$-free. Therefore, if $(W_n)_{n\in\N}$ satisfies the conclusion of Theorem~\ref{theorem:youngmeasurejensenp}, then so will $(U_n)_{n\in\N}$.




Let now $\mathcal{I}$ be a countable dense subset of $L^1(\T^d)$ and similarly, $\mathcal{C}$ be a countable dense subset of $C_0(\R^N)$. We know that if $U_n$ generates the Young measure $\nu_x$, then for any $g\in \mathcal{C}$, $g(U_n)\stackrel{\star}{\rightharpoonup}\langle \nu_x,g\rangle$ in $L^\infty(\Omega)$.
Let $\Omega_0$ be the set of all Lebesgue points $a\in \T^d$ of $U$ which are at the same time Lebesgue points for the functions 
\begin{equation}
x\mapsto \int_{\R^N} |\xi|^p \d \nu_x(\xi) 
\end{equation}   
and $x\mapsto \langle \nu_{x}, g \rangle$ for $g\in \mathcal{C}$, so that in particular
\begin{equation}
\lim_{R\to 0} \int_{\T^d} |\langle \nu_{a+Rx},g \rangle -\langle \nu_a ,g\rangle|\d x=0.
\end{equation}


Let $\Phi_n$ be the potential for the sequence $U_n-U$ (which has mean zero!), thus, $\mathcal{B} \Phi_n(x)=U_n(x)-U(x)$.

Identifying the torus $\T^d$ with a cube $Q$, let $\chi_j$ be a sequence of positive smooth cut-off functions in $C_c^\infty(Q)$ converging monotonically up to 1. 
Then for a fixed $a\in \Omega_0$ we define a new family $U_{j,R,n}$ ($R>0$, $j,n\in\mathbb{N}$) by 
\begin{equation}
U_{j,R,n}(x):= \mathcal{B}[\chi_{j}(x)R^{-l}\Phi_n(a+Rx)]+U(a+Rx) 
\end{equation}
for $x\in \T^d$. 

We have assumed that $U_n-U\rightharpoonup 0$ in $L^p$, so by linearity of $\mathcal B$ and the compact embedding $W^{1,p}\hookrightarrow C^{0,\eps}$ for some $\eps>0$   we obtain uniform convergence of $\partial_{\beta}\Phi_n\to 0$ for $|\beta|\le  l-1$. 

We clearly see that, as $\mathcal{B}$ maps to the kernel of $\mathcal{A}$, 
\begin{equation}
\mathcal{A} (U_{j,R,n})= \mathcal{A}(\mathcal{B}[\chi_j(x)R^{-l}\Phi_n(a+Rx)]+U(a+Rx))=0
\end{equation}
for all $j,R,n$. 

We can expand the expression to see that
\begin{align}
U_{j,R,n}(x)=& \mathcal{B}[\chi_j(x)R^{-l}\Phi_n(a+Rx)]+U(a+Rx)\\
=&\chi_j(x)\left(U_n(a+Rx) -U(a+Rx)  \right)+U(a+Rx)\\
+& \sum_{|\alpha|+|\beta|=l, |\beta|>1} R^{|\alpha|-l} \partial_{\alpha}\Phi_n(a+Rx) B^{(\alpha,\beta)} \partial_\beta \chi_j(x).
\end{align}
One notices that the $R^{-l}$ was added so that it scales with the $R$ gained from each differentiation, as the operator $\mathcal{B}$ is homogeneous of degree $l$.

We will now show that the sequence $U_{j,R,n}\in L^p(\T^d;\R^d)$ generates the homogeneous Young measure $\nu_a$.  For all $\psi\in \mathcal{I}$ and $g\in \mathcal{C}$,  
\begin{align}
\lim_{j\to\infty}&\lim_{R\to 0}\lim_{n\to\infty}\int_{\T^d} \psi(x)g(U_{j,R,n}(x))\d x\\
=&\lim_{j\to\infty}\lim_{R\to 0}\lim_{n\to\infty}\int_{\T^d} \psi(x)g\Big[\chi_j(x)\left(U_n(a+Rx) - U(a+Rx)   \right)+U(a+Rx)\\
&\hspace{2cm}+ \sum_{|\alpha|+|\beta|=l} R^{|\alpha|-l} \partial_{\alpha}\Phi_n(a+Rx) B^{(\alpha,\beta)} \partial_\beta \chi_j(x)\Big]\d x\\
=&\lim_{j\to\infty}\lim_{R\to 0}\int_{\T^d} \psi(x)\int g\left[\chi_j(x)\left(\xi - \langle \id,  \nu_{a+Rx}\rangle    \right)+\langle \id, \nu_{a+Rx}\rangle\right]\d \nu_{a+Rx}(\xi)\d x\\
=&\lim_{j\to\infty}\int_{\T^d} \psi(x)\int g\left[\chi_j(x)\left(\xi -\langle \id,  \nu_{a}\rangle   \right)+\langle \id, \nu_{a}\rangle\right]\d \nu_{a}(\xi)\d x\\
=&\langle \nu_a,g\rangle\int_{\T^d} \psi(x)\d x,
\end{align}
where we used the uniform convergence of $\partial_\alpha\Phi_n$, the property that $U_n$ generates $\nu$, and the Lebesgue point property of $a$.

We finally need to show that $U_{j,R,n}(x)\in K$ for almost every $x$. We see that 
\begin{multline}
\left|U_{j,R,n}(x)-\left[\chi_j(x) U_n(a+Rx) +(1-\chi_j(x)) U(a+Rx)\right]\right|\\
\leq\left |\sum_{|\alpha|+|\beta|=l} R^{|\alpha|-l} \partial_{\alpha}\Phi_n(a+Rx) B^{(\alpha,\beta)} \partial_\beta \chi_j(x)\right |.
\end{multline}
The right hand side converges to zero uniformly (when the limit is taken in the order $n\to\infty$, then $R\to0$, then $j\to\infty$), and the second two terms on the left hand side form a convex combination of functions with values in $K$ and distance at least $\delta_n$ from $\partial K$. This ensures that $U_{j,R,n}(x)\in K$ almost everywhere for sufficiently large $n$.  

We can now use a diagonalisation procedure (choosing $R=R(n)$ and $j=j(n)$ appropriately) to find a subsequence $V_n$ (relabelled with index $n$) with all the properties shown above. This establishes Theorem~\ref{theorem:youngmeasurejensenp} and thus Theorem~\ref{theorem:lscoffinfty}.

\end{proof}

\appendix

\section{Potential for Divergence} \label{appendix}

Here we give an explicit example of a potential operator for the divergence of a two-dimensional vector field (though this also works in higher dimensions) in order to better illustrate the techniques from \cite{RaitaPotentials}.

\begin{example}\label{exaple:RaitaDivergence}
	Let $\mathcal A = \diverg$ in two dimensions. Then $\mathcal A[\xi]=\xi^T$ and $\mathcal A^*[\xi]=\bar \xi$ and so 
	\begin{equation}
	\mathcal A[\xi]\mathcal A^*[\xi]=\begin{pmatrix}
	\xi_1\xi_1 &\xi_1\xi_2\\
	\xi_1\xi_2 & \xi_2\xi_2
	\end{pmatrix}.
	\end{equation}
	We want to calculate the characteristic  polynomial of this matrix so we must look at 
	\begin{equation}
	\det [\mathcal A[\xi]\mathcal A^*[\xi]-\lambda \, \mathrm{Id}]= \det \begin{bmatrix}
	\xi_1\xi_1 -\lambda&\xi_1\xi_2\\
	\xi_1\xi_2 & \xi_2\xi_2-\lambda 
	\end{bmatrix}=\lambda^2-|\xi|^2\lambda=(a^a_0)\lambda^2+(a^a_1)\lambda.
	\end{equation}  
	From this we see that $r^a=1$ and
	\begin{equation}
	\mathcal A^\dagger [\xi]=\frac{1}{|\xi|^2}\xi[a^a_0(\mathcal A[\xi]\mathcal A^*[\xi])^0]=\frac{\xi}{|\xi|^2}\mathrm{Id}
	\end{equation}
	and thus 
	\begin{equation}
	\mathcal L [\xi]=|\xi^2|[\mathrm{Id}-\frac{\xi}{|\xi|^2}\mathrm{Id}\xi^T]=(|\xi|^2\mathrm{Id}-\xi\otimes \xi).
	\end{equation}
	Similarly, we can apply the same method to $\mathcal L$. We see that 
	\begin{equation}
	\det[\mathcal L \mathcal L^* -\lambda \mathrm{Id}]=-(\lambda^2+|\xi|^4\lambda) =a^l_0\lambda^2 + a^l_1
	\end{equation} 
	and so $r^l=1$ and we obtain that
	\begin{equation}
	\mathcal L^\dagger[\xi] =-\frac{1}{|\xi|^4}\mathcal L^*[\xi](-1)=\frac{1}{|\xi|^4}(|\xi|^2\mathrm{Id} -\xi\otimes \xi)
	\end{equation}
	and thus
	\begin{align}
	\mathcal{G}[\xi] &=|\xi|^4[\mathrm{Id}-\frac{1}{|\xi|^4}(|\xi|^2\mathrm{Id} -\xi\otimes \xi)(|\xi|^2\mathrm{Id} -\xi\otimes \xi)]\\
	&=[2|\xi|^2 \xi\otimes \xi -(\xi\otimes \xi)^2].
	\end{align}
	We shall consider the system
	\begin{align}
	\mathcal L \Phi&=U\\
	\mathcal G \Phi&=0 
	\end{align} 
	and see that 
	\begin{align}
	\mathcal L[\xi] \hat\Phi(\xi)=(|\xi|^2\mathrm{Id}-\xi\otimes \xi) \hat\Phi(\xi) &=\hat U(\xi)\\
	\mathcal G[\xi] \hat\Phi(\xi)=[2|\xi|^2 \xi\otimes \xi -(\xi\otimes \xi)^2] \hat\Phi(\xi)&=0.
	\end{align}
	We can manipulate $\mathcal G[\xi] \hat \Phi [\xi]$ and see that 
	\begin{equation}
	\mathcal G[\xi] \hat\Phi(\xi)=(\xi\otimes \xi)[|\xi|^2\mathrm{Id}+|\xi|^2\mathrm{Id}  -(\xi\otimes \xi)] \hat\Phi(\xi)=0
	\end{equation}
	and thus 
	\begin{align}
	(\xi\otimes \xi)|\xi|^2\mathrm{Id}\hat\Phi(\xi)&=-(\xi\otimes \xi)[|\xi|^2\mathrm{Id}  -(\xi\otimes \xi)] \hat\Phi(\xi)\\
	&=(\xi\otimes \xi)(-\mathcal L[\xi]) \hat\Phi[\xi]= -(\xi\otimes \xi)\hat U(\xi).
	\end{align}
	We can rearrange this and obtain that 
	\begin{equation}
	\hat\Phi(\xi)=-|\xi|^{-2}\hat U(\xi).
	\end{equation} 
	We thus have an elliptic operator relating $U$ and $\Phi$ and can apply Calder\'on-Zygmund theory to the singular integral kernel generated from this symbol. Assuming a Lipschitz domain, we obtain 
	for $1<p<\infty$ that $\|\Phi\|_{W^{2,p}}\le C \|U\|_{L^p}$ as we needed. 
\end{example}

In order to use the construction in \cite{RaitaPotentials} we used the  property that $\|\Phi\|_{W^{l,p}}\le C \|U\|_{L^p}$ for a suitable choice of $\Phi$. Here we show another method to do this where we will need to find another condition on $\Phi$ that does not interfere with the property $\mathcal B \Phi =U$, yet 
creates an elliptic system that will give us the above bound.
We see that as $\mathcal B$ is another linear homogeneous differential operator with constant rank and constant coefficients, we can apply the theory of Raita again to find a      linear homogeneous differential operator with constant rank and constant coefficients $\mathcal G$ such that
\begin{equation}
\ker \mathcal B=\image \mathcal G
\end{equation}
and so this will characterise the kernel of $\mathcal B$. Now, any $\Phi$  can be split into two parts  $\Phi=\Psi +\mathcal G h$, where $\Psi$ is orthogonal to the kernel of $\mathcal B$. If we impose the condition $\mathcal G h=0$,  this will force  $\Phi=\Psi$ and thus $\Phi$ will be orthogonal to the kernel of $\mathcal B$. Further, by construction,  $\mathcal G$ is self-adjoint and so for all $h\in C^\infty_c$ 
we have
\begin{equation}
0=\langle \mathcal G h, \Phi\rangle= \langle  h,\mathcal G  \Phi\rangle.
\end{equation}
Thus, having imposed $\mathcal G h=0$, we obtain   the condition 
$\mathcal G \Phi=0$. 
This suggests that $\mathcal G \Phi=0$ would be a suitable condition to select only the $\Phi$ that has no component in the kernel of $\mathcal B$. Indeed we will show that this is enough to 
control the full derivative of $\Phi$ by $U$ in $L^p$ for $1<p<\infty$. 

Further, as $\mathcal B$ and $\mathcal G$ are self-adjoint, the domain and target spaces of the respective Fourier symbols are the same, say $W$, while the target space of the symbol of $\mathcal A$ is called $V$; see the following diagram: 
\begin{equation}
\cdots \underset{h}{W}\xrightarrow[]{\mathcal{G}}  \underset{\Phi, \mathcal G h=\Phi}{W}\xrightarrow[]{\mathcal B} \underset{U, \mathcal B \Phi=U}{W}\xrightarrow[]{\mathcal A} \underset{\mathcal A U}{V}
\end{equation}   

\begin{theorem}
	Let $1<p<\infty$ and let $\mathcal A, \mathcal B$ and $\mathcal G$ be linear, homogeneous, differential operators, with constant rank and constant coefficients and assume that 
	\begin{equation}
	\ker \mathcal A[\xi]= \image \mathcal B [\xi] \quad \mathrm{and }\quad \ker \mathcal B[\xi]= \image \mathcal G [\xi] 
	\end{equation}
	for all $\xi\in \R^d\setminus \{0\}$. Then there exists a constant $C$ such that for all
	$U\in L^p(\T^d)$ 
	such that $\mathcal A U=0$ and $\int_{\T^d}U \d x =0$, there exists a 
	$\Phi\in W^{l,p}(\T^d)$ 
	(where $l=2k$ and $k$ is the homogeneity of $\mathcal A$) such that
	\begin{align}
	\mathcal L \Phi&=U\\
	\mathcal G \Phi&=0,
	\end{align}	  
	and such that
	$\|\Phi\|_{W^{l,p}}\le C\|U\|_{L^p}$.
\end{theorem}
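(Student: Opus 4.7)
The plan is to construct $\Phi$ mode by mode in Fourier space on $\T^d$, using the Moore--Penrose pseudo-inverse of the symbol $\mathcal L[\xi]$ to resolve the ambiguity $\ker\mathcal L[\xi]=\mathrm{im}\,\mathcal G[\xi]$, and then to deduce the Sobolev estimate from a Mikhlin--H\"ormander multiplier theorem on the torus.

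First I would set $\hat\Phi(0)=0$, and for each $\xi\in\Z^d\setminus\{0\}$ use $\mathcal A U=0$ together with $\ker\mathcal A[\xi]=\mathrm{im}\,\mathcal L[\xi]$ to conclude that $\hat U(\xi)\in\mathrm{im}\,\mathcal L[\xi]$; then I define $\hat\Phi(\xi)$ to be the unique solution of $\mathcal L[\xi]v=\hat U(\xi)$ lying in $(\ker\mathcal L[\xi])^\perp$. Writing $\hat\Phi(\xi)=M(\xi)\hat U(\xi)$ identifies $M$ as the pseudo-inverse of $\mathcal L[\xi]$ precomposed with the orthogonal projection onto $\mathrm{im}\,\mathcal L[\xi]$. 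Using $\ker\mathcal L[\xi]=\mathrm{im}\,\mathcal G[\xi]$ and the formal self-adjointness of $\mathcal G$, one has $(\ker\mathcal L[\xi])^\perp=\ker\mathcal G[\xi]$ at the symbol level, so by construction $\mathcal G[\xi]\hat\Phi(\xi)=0$ for every $\xi$; this yields $\mathcal G\Phi=0$ distributionally alongside $\mathcal L\Phi=U$. Constant rank of $\mathcal L$ makes $\mathrm{im}\,\mathcal L[\xi]$ and $(\ker\mathcal L[\xi])^\perp$ depend smoothly on $\xi$ on $\R^d\setminus\{0\}$, so $M\in C^\infty(\R^d\setminus\{0\})$, and since $\mathcal L[\xi]$ is polynomial and homogeneous of degree $l:=\mathrm{ord}(\mathcal L)$, $M(\xi)$ is positively homogeneous of degree $-l$ with $|\partial^\beta M(\xi)|\le C_\beta|\xi|^{-l-|\beta|}$ for every multi-index $\beta$.

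With the multiplier under control, the Sobolev estimate becomes routine: for every multi-index $\alpha$ with $|\alpha|\le l$, the Fourier multiplier $(2\pi i\xi)^\alpha M(\xi)$ associated with $\partial^\alpha\Phi$ is smooth off the origin and homogeneous of degree $|\alpha|-l\le 0$, hence satisfies the classical Mikhlin--H\"ormander symbol estimates; invoking the periodic version of that theorem gives $\|\partial^\alpha\Phi\|_{L^p(\T^d)}\le C\|U\|_{L^p(\T^d)}$ for every $1<p<\infty$, and summing over $|\alpha|\le l$ together with $\hat\Phi(0)=0$ delivers $\|\Phi\|_{W^{l,p}}\le C\|U\|_{L^p}$. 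The value $l=2k$ can be realised by choosing $\mathcal L$ via Raita's construction in \cite{RaitaPotentials}. The main obstacle is the quantitative lower bound $|\mathcal L[\xi]v|\ge c|\xi|^l|v|$ for $v\in(\ker\mathcal L[\xi])^\perp$ and $\xi\neq 0$ that is needed to control $M$ and its derivatives: by construction the restriction is injective, so its smallest singular value is continuous and strictly positive on the compact sphere $S^{d-1}$, and homogeneity extends the bound to all $\xi\neq 0$. A secondary technical point is verifying that formal self-adjointness of $\mathcal G$ at the differential operator level truly gives $\ker\mathcal G[\xi]=(\mathrm{im}\,\mathcal G[\xi])^\perp$ at the symbol level with the correct normalisation of factors of $i$, which is guaranteed by the explicit construction of $\mathcal G$ in \cite{RaitaPotentials}.
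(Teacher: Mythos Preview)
Your proposal is correct and reaches the same conclusion, but it follows a more abstract route than the paper. The paper does not work with the pseudo-inverse $\mathcal L^\dagger[\xi]$ as a black box; instead it substitutes Raita's explicit formulas $\mathcal L[\xi]=a^a_r(\xi)\bigl[\mathrm{Id}-\mathcal A^\dagger[\xi]\mathcal A[\xi]\bigr]$ and $\mathcal G[\xi]=a^l_r(\xi)\bigl[\mathrm{Id}-\mathcal L^\dagger[\xi]\mathcal L[\xi]\bigr]$, expands $\mathcal G[\xi]$ entirely in terms of $\mathcal A[\xi]$ using the Decell polynomial expression for the pseudo-inverse, and then eliminates to obtain the \emph{scalar} relation $\hat\Phi(\xi)=-\dfrac{C}{a^a_r(\xi)}\hat U(\xi)$. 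Since $a^a_r(\xi)$ is a real polynomial homogeneous of degree $2k$ that never vanishes for $\xi\neq 0$, this is an elliptic scalar symbol and Calder\'on--Zygmund theory gives the estimate directly.

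Your argument instead sets $M(\xi)=\mathcal L^\dagger[\xi]$, appeals to the smoothness of the pseudo-inverse under the constant-rank hypothesis and its homogeneity of degree $-l$, and then runs Mikhlin--H\"ormander on each multiplier $\xi^\alpha M(\xi)$. This is cleaner and avoids the algebraic bookkeeping with the coefficients $a^l_j,\,a^a_r$; it also makes transparent that the argument only needs the abstract symbol relations and the symmetry of $\mathcal G[\xi]$, not the particular Raita choice. What the paper's computation buys, on the other hand, is the stronger structural fact that the multiplier is actually scalar, so one lands squarely in classical elliptic regularity rather than matrix-valued multiplier theory. The one point you correctly flag --- that $(\ker\mathcal L[\xi])^\perp=\ker\mathcal G[\xi]$ requires $\mathcal G[\xi]=\mathcal G[\xi]^*$ --- is indeed guaranteed by Raita's construction (where $\mathcal G[\xi]$ is a scalar multiple of an orthogonal projection), and the paper uses exactly the same explicit formula for this purpose.
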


\begin{remark}
	For the case of $DPT$s, it is possible to construct the potential operator ``by hand'' via successive application of Poincar{\'e}'s Lemma, see Remark~\ref{bdddomain}. 
\end{remark}	

When $U\in C^\infty$, we observe from Theorem 1 in \cite{RaitaPotentials}  that from $\mathcal A$ we have the existence of $\mathcal B $ and $\mathcal G $ and further from Lemma 5 we see that $\Phi\in C^\infty(\T^d) $ exits. Here we just have to prove that adding the extra condition $\mathcal G \Phi=0$ gives the bound  $\|\Phi\|_{W^{l,p}}\le C\|U\|_{L^p}$, and the extension to $L^p$ then follows immediately from an easy approximation. 

Before we begin the proof, let us fix some notation. For a matrix $M\in \R^{n\times m}$ we define its pseudo-inverse   $M^\dagger\in \R^{m\times n}$    as the unique matrix determined by the relations 
\begin{equation}
M M^\dagger M=M, \quad M^\dagger M M^\dagger=M^\dagger,\quad (M^\dagger M) ^* =M^\dagger M, \quad (M M^\dagger) ^* =M M^\dagger,
\end{equation}
where $M^*$ is the adjoint of $M$. Further, from  \cite{Decellgeneralisedinverse}, for a matrix $M\in \R^{n\times m}$ denote by 
\begin{equation}
p(\lambda):= (-1)^n(a_0\lambda^n+a_1 \lambda^{n-1}+\cdots +a_n )
\end{equation}
as the characteristic polynomial of $MM^*$, where $a_0=1$. Define $r=\max \{ j\in \N \colon a_j>0 \}$. Then, if $r=0$, we have that $M^\dagger =0$; else 
\begin{equation}
M^\dagger=-a_r^{-1} M^* [a_0(MM^*)^{r-1} +a_1 (MM^*)^{r-2}+\cdots +a_{r-1}\mathrm{Id}_{n\times n}].
\end{equation}

\begin{proof}
	
	We can apply the theory in \cite{RaitaPotentials} to obtain the following formulas for $\mathcal{B}[\xi]$ and $\mathcal{G}[\xi]$:
	\begin{equation}
	\mathcal B[\xi]=a^a_r(\xi)[\mathrm{Id}-\mathcal A^\dagger[\xi]\mathcal{A}[\xi]],
	\end{equation}  
	where $a^a_r(\xi)$ is the $r^{th}$ term of the characteristic polynomial of $\mathcal A[\xi]\mathcal A^*[\xi]$, and similarly 
	\begin{equation}
	\mathcal G[\xi]=a^l_r(\xi)[\mathrm{Id}-\mathcal B^\dagger[\xi]\mathcal{L}[\xi]],
	\end{equation}  
	where $a^l_r(\xi)$ is the $r^{th}$ term of the characteristic polynomial of $\mathcal B[\xi]\mathcal B^*[\xi]$.
	We want to represent $\mathcal G[\xi]$ in terms of $\mathcal A[\xi]$. Firstly, we will expand $\mathcal B^\dagger[\xi]$ and so obtain
	\begin{multline}
	\mathcal B^\dagger[\xi]=-(a^l_r)^{-1}(\xi) \mathcal B^*[\xi]\big[a^l_0(\xi)(\mathcal B[\xi]\mathcal B^*[\xi])^{r-1}+\\a^l_1(\xi)(\mathcal B[\xi]\mathcal B^*[\xi])^{r-2}+\cdots +a^l_{r-1}(\xi)\mathrm{Id}\big]
	\end{multline}
	and so $a^l_r(\xi)\mathcal B^\dagger[\xi]\mathcal{L}[\xi]$ becomes 
	\begin{align}
	&- \mathcal B^*[\xi][a^l_0(\xi)(\mathcal B[\xi]\mathcal B^*[\xi])^{r-1}+a^l_1(\xi)(\mathcal B[\xi]\mathcal B^*[\xi])^{r-2}+\cdots +a^l_{r-1}(\xi)\mathrm{Id}]\mathcal{B}[\xi]\\
	&\quad =- [a^l_0(\xi)(\mathcal B^*[\xi]\mathcal B[\xi])^{r}+a^l_1(\xi)(\mathcal B^*[\xi]\mathcal B[\xi])^{r-1}+\cdots +a^l_{r-1}(\xi)\mathcal B^*[\xi]\mathcal B[\xi]].
	\end{align}
	Further observe that 
	\begin{align}
	\mathcal B[\xi]\mathcal B^*[\xi]&=a^a_r(\xi)[\mathrm{Id}-\mathcal A^\dagger[\xi]\mathcal{A}[\xi]](a^a_r(\xi)[\mathrm{Id}-\mathcal A^\dagger[\xi]\mathcal{A}[\xi]])^*\\
	&= (a^a_r)^2(\xi)[\mathrm{Id}-\mathcal A^\dagger[\xi]\mathcal{A}[\xi]]
	\end{align}
	and thus 
	\begin{align}
	\mathcal G[\xi]&=a^l_r(\xi)\mathrm{Id}+\big[a^l_0(\xi)(a^a_r)^{2r}(\xi)[\mathrm{Id}-\mathcal A^\dagger[\xi]\mathcal{A}[\xi]]\\
	&+a^l_1(\xi)(a^a_r)^{2r-2}(\xi)[\mathrm{Id}-\mathcal A^\dagger[\xi]\mathcal{A}[\xi]]+\cdots +a^l_{r-1}(\xi)(a^a_r)^2(\xi)[\mathrm{Id}-\mathcal A^\dagger[\xi]\mathcal{A}[\xi]]\big]\\
	&=a^l_r(\xi)\mathrm{Id}+\big[a^l_0(\xi)(a^a_r)^{2r}(\xi)
	+a^l_1(\xi)(a^a_r)^{2r-2}(\xi)\\
	&\hspace{2cm}+\cdots +a^l_{r-1}(\xi)(a^a_r)^2(\xi)\big][\mathrm{Id}-\mathcal A^\dagger[\xi]\mathcal{A}[\xi]].\label{Gxiexpansion}
	\end{align}
	Consider the system 
	\begin{align}
	\mathcal{B}[\xi] \hat \Phi(\xi)=\hat U(\xi)\\
	\mathcal{G}[\xi] \hat \Phi(\xi)=0,
	\end{align}
	then $\mathcal{G}[\xi] \hat \Phi(\xi)=0$ implies that 
	\begin{equation}
	-a^l_r(\xi)\hat \Phi(\xi)=[a^l_0(\xi)(a^a_r)^{2r-1}(\xi)
	+a^l_1(\xi)(a^a_r)^{2r-3}(\xi)
	+\cdots +a^l_{r-1}(\xi)(a^a_r)(\xi)]\hat U(\xi)
	\end{equation}
	and thus
	\begin{equation}
	\hat \Phi(\xi)=-\frac{[a^l_0(\xi)(a^a_r)^{2r-1}(\xi)
		+a^l_1(\xi)(a^a_r)^{2r-3}(\xi)
		+\cdots +a^l_{r-1}(\xi)(a^a_r)(\xi)]}{a^l_r(\xi)}\hat U(\xi).
	\end{equation}
	From this and using \eqref{Gxiexpansion} we see that 
	\begin{equation}\label{eq:elipticformofphi}
	\hat \Phi(\xi)= -\frac{C}{a^a_r(\xi)}\hat U(\xi).
	\end{equation}
	Finally, note that $a^a_r(\xi)$ is the last term in the characteristic  polynomial of $\mathcal{A}[\xi]\mathcal{A}^*[\xi]$, which is a symmetric matrix, and $\mathcal{A}$ is homogeneous and has constant rank. Therefore, $a^a_r$ is real valued and has order $2k$ in $\xi$. Thus \eqref{eq:elipticformofphi} simplifies to 
	\begin{equation}
	\hat \Phi(\xi)= -\frac{C}{(\xi)^{2k}}\hat U(\xi)
	\end{equation}
	and so we have a relation with an elliptic symbol and can apply Calder\'on-Zygmund theory (see \cite{GrafkosCalderonZygmund} and also \cite{SteinSingularintegrals}) to the singular integral kernel generated from this symbol. Thus using techniques in \cite{MazyaSobolevspaces}, (assuming a Lipschitz domain , here a torus), we obtain 
	\begin{equation}
	\|\Phi\|_{W^{2k,p}}\le C \|U\|_{L^p}. 
	\end{equation} 
	We can use density of $C^\infty$ in $L^p$ and $W^{2k,p}$ with the bound above to see that the estimate extends to $U\in L^p$.
\end{proof}

\subsection*{Acknowledgements}

 The research funding for J.S. was graciously given by German research foundation (DFG) grant no. WI 4840/1-1.
 
 We would like to thank Bogdan Raita for valuable discussions on the topic.

\end{document}